\documentclass[reqno]{amsart}

\usepackage{enumerate}
\usepackage{tabto}
\usepackage[mathscr]{euscript}
\usepackage{xcolor}
\usepackage{layout}
\usepackage{fancyhdr}
\usepackage{array}
\usepackage{amsfonts}
\usepackage{amsmath}
\usepackage{amssymb}
\usepackage{mathtools}
\usepackage{graphicx}
\usepackage{bm}
\usepackage{enumitem}
\usepackage{caption} 
\usepackage{color}
\usepackage{kotex}
\usepackage{csquotes}
\usepackage{bookmark}
\usepackage{float}
\usepackage{multirow}
\usepackage[square,numbers,sort&compress]{natbib}
\usepackage{hyperref}
\hypersetup{colorlinks=true,linkcolor=blue,citecolor=red}
\allowdisplaybreaks

\def\XXint#1#2#3{{\setbox0=\hbox{$#1{#2#3}{\int}$ }
\vcenter{\hbox{$#2#3$ }}\kern-.6\wd0}}

\newtheorem{theorem}{Theorem}[section]

\newtheorem{proposition}[theorem]{Proposition}
\newtheorem{remark}[theorem]{Remark}
\theoremstyle{definition}
\newtheorem{definition}[theorem]{Definition}

\numberwithin{equation}{section}

\newcommand{ \mc }{ \mathcal }
\newcommand{ \mr }{ \mathbb{R} }

\newcommand{\iints}[1]{{\int\hspace{-0.28cm}\int_{#1}}}

\newcommand{ \dist }{ \operatorname{dist} }
\newcommand{ \supp }{ \operatorname{supp} }
\newcommand{ \loc }{ \operatorname{loc} }

\begin{document}
\title[Absence of the Lavrentiev phenomenon]{Absence of the Lavrentiev phenomenon for degenerate parabolic double phase problems}

\author{Bogi Kim}\address{Department of Mathematics, Kyungpook National University, Daegu, 41566, Republic of Korea} \email{rlaqhrl4@knu.ac.kr} \author{Youngchae Kim}\address{Department of Mathematics, Kyungpook National University, Daegu, 41566, Republic of Korea} \email{rjseka5241@knu.ac.kr} \author{Jehan Oh}\address{Department of Mathematics, Kyungpook National University, Daegu, 41566, Republic of Korea} \email{jehan.oh@knu.ac.kr}

\subjclass{Primary 49N60; Secondary 35K55, 35K65}
\date{\today.}
\keywords{degenerate parabolic equation, Lavrentiev phenomenon, parabolic double phase functional}
\thanks{Bogi Kim is supported by the National Research Foundation of Korea (NRF) grant funded by the Korea government [Grant No. RS-2025-25426375].
Youngchae Kim is supported by the National Research Foundation of Korea (NRF) grant funded by the Korea government [Grant No. RS-2025-25433309].
Jehan Oh is supported by the National Research Foundation of Korea (NRF) grant funded by the Korea government [Grant Nos. RS-2025-00555316, and RS-2025-25415411].}

\begin{abstract}
We establish the absence of the Lavrentiev phenomenon for degenerate parabolic double phase problems. Any finite-energy function in the natural parabolic class admits smooth approximations with convergence in the parabolic Sobolev space and convergence of the corresponding energy. We provide explicit gap bound conditions and derive improved bounds under additional assumptions such as boundedness or stronger time regularity.
\end{abstract}
\maketitle

\section{\bf Introduction}\label{section 1}
In this paper, we introduce a new parabolic double phase functional and establish the absence of the Lavrentiev phenomenon for this class of functionals. Before describing the new functional framework, we briefly recall the concept of the Lavrentiev phenomenon in variational analysis. One of the fundamental questions in the calculus of variations concerns whether minimizers exist and whether the infimum is stable under approximation by smoother or more regular functions. The Lavrentiev phenomenon, however, presents a counter-intuitive case that challenges this expectation. Specifically, for a functional $\mathcal{G}$ defined on a function space $X$, its infimum over $X$ may be strictly smaller than the infimum taken over a dense subspace $Y\subset X$:
$$
\inf_X \mathcal{G}<\inf_Y \mathcal{G}.
$$
In particular, for non-autonomous functionals of the type
$$
u\mapsto \mathcal{G}_0(u,\Omega):= \int_\Omega G(x,Du)\, dx,
$$
where $\Omega$ is an open subset of $\mr^n$ ($n\geq 2$), the Lavrentiev phenomenon occurs in a ball $B\subset\subset \Omega$ if
$$
\inf_{u\in u_0+W_0^{1,p}(B)} \mathcal{G}_0(u,B)<\inf_{u\in u_0+W^{1,p}_0(B)\cap W^{1,q}_{\operatorname{loc}}(B)}\mathcal{G}_0(u,B),
$$
 where $u_0$ is a boundary data function with suitable regularity and the Carath\'{e}odory function $G:\Omega\times\mr^n\rightarrow\mr$ satisfies non-standard growth conditions (the terminology is introduced in \cite{Marcellini1989, Marcellini1991,Marcellini2020}) of $(p,q)$-type
$$
|\xi|^p\lesssim G(x,\xi)\lesssim |\xi|^q+1, \qquad 1<p<q.
$$
However, this phenomenon should not be viewed merely as an obstruction to regularity. According to \cite{Mingione2021}, it is a crucial observation that the same assumptions ensuring a priori regularity estimates for the minimizers can also be employed to prove the absence of the Lavrentiev phenomenon. Furthermore, this relationship may hold in reverse: by first establishing the absence of the Lavrentiev phenomenon, one can leverage this result to derive regularity estimates for the solutions. In this context, the absence of the Lavrentiev phenomenon means that for a map $u\in W^{1,p}(B)$, there exists a sequence of more regular $u_k$ such that for any ball $B\subset\subset \Omega$,
$$
\mathcal{G}_0(u_k,B)\rightarrow\mathcal{G}_0(u,B).
$$

The elliptic double phase functional
$$
W^{1,1}(\Omega)\ni w \mapsto \mathcal{H}_0(w,\Omega):= \int_\Omega \left[|Dw|^p+a(x)|Dw|^q\right] dx
$$
with an open set $\Omega\subset\mr^n$, $1 < p < q$ and $0\leq a(\cdot)\in C^{\alpha}(\Omega)\; (\alpha\in (0,1])$ serves as one of the most prominent examples of the Lavrentiev phenomenon, see \cite{Zhikov1986,Zhikov1993,Zhikov1995,Zhikov1997}. The corresponding Euler-Lagrange equation is given by
$$
-\operatorname{div}(p|Du|^{p-2}Du + q a(x)|Du|^{q-2}Du)=0,
$$
which characterizes the homogenization of strongly anisotropic materials. To resolve the Lavrentiev phenomenon for this problem, we require a gap bound condition that relates the proximity of $p$ and $q$ and the regularity of $a(\cdot)$. In fact, Esposito-Leonetti-Mingione \cite{Esposito2004} and Colombo-Mingione \cite{Colombo2015} have established that the gap bound condition 
\begin{equation}\label{cond : gap bound condition of elliptic double phase}
\frac{q}{p} \leq 1 + \frac{\alpha}{n}
\end{equation}
is a necessary and sufficient condition for the absence of the Lavrentiev phenomenon. More precisely, this condition implies that the gradient of a weak solution $u$ is in $L^q(\Omega)$. Alternatively, the gap bound condition can be relaxed by imposing an a priori condition on $u$. For example, the absence of the Lavrentiev phenomenon is also guaranteed under the condition
\begin{equation}\label{cond : gap bound condition with bounded solution of elliptic double phase}
u\in L^\infty(\Omega)\quad \text{and}\quad q\leq p +\alpha,
\end{equation}
see \cite{Fonseca2004,Esposito2004,Colombo2015a}. Also, in \cite{Bulicek2022}, if
$$
u_0\in W^{1,q}(\Omega)\quad \text{and}\quad q\leq p+\alpha\max\left\{1,\frac{p}{n}\right\},
$$
the Lavrentiev phenomenon does not occur, that is, for any ball $B\subset\subset \Omega$,
$$
\inf_{u\in u_0+W^{1,p}_0 (B)} \mathcal{H}_0(u,B) = \inf_{u\in u_0+W^{1,q}_0(B)} \mathcal{H}_0(u,B)=\inf_{u\in u_0+C_c^\infty (B)}\mathcal{H}_0(u,B).
$$
On the other hand, in \cite{borowski2025}, it is demonstrated that the gap bound condition can be weakened while assuming higher regularity for $a(\cdot)$, still ensuring the absence of the Lavrentiev phenomenon:
$$
a\in C^{k,\alpha}(\Omega)\quad \text{and}\quad q\leq p+(k+\alpha)\max\left\{1,\frac{p}{n}\right\}.
$$
Imposing such gap bound conditions allows one to avoid the Lavrentiev phenomenon. Furthermore, these assumptions are also essential for improving the regularity of $u$. Under either \eqref{cond : gap bound condition of elliptic double phase} or \eqref{cond : gap bound condition with bounded solution of elliptic double phase}, Baroni-Colombo-Mingione \cite{Baroni2018} have proved that the gradient of a weak solution $u$ is locally H\"{o}lder continuous. Additionally, the authors have demonstrated that the same conclusion remains valid whenever
$$
u\in C^\gamma(\Omega)\quad \text{and}\quad q\leq p+\frac{\alpha}{1-\gamma}\quad \text{with }\gamma\in(0,1)
$$
is satisfied. Moreover, it is shown in \cite{Ok2020} that the local H\"{o}lder continuity of a local quasi-minimizer $u$ is also guaranteed under the assumption
$$
u\in L^\gamma_{\operatorname{loc}}(\Omega)\quad \text{and}\quad q\leq p +\frac{\gamma\alpha}{n+\gamma}
$$
for $p\in(1,n)$ and $\gamma>\frac{np}{n-p}$. In addition, regarding weak solutions, Baroni-Colombo-Mingione \cite{Baroni2015} and Ok \cite{Ok2017} have proved Harnack's inequality and H\"{o}lder continuity. Also, Calder\'{o}n-Zygmund estimates were derived in \cite{Baasandorj2020,Colombo2016,DeFilippis2019}. More broadly, a significant body of regularity results for elliptic double phase problems has been developed in \cite{Byun2021,Byun2021a,Byun2017,Byun2020,2023DeFilippis,Haestoe2022,Haestoe2022a,Kim2026,Kim2024a,Kim2025a}.

The model equation of parabolic double phase problems is given by
\begin{equation}\label{def : main model equation of parabolic double phase problem}
u_t-\operatorname{div}(|Du|^{p-2}Du+a(x,t)|Du|^{q-2}Du)=0\quad \text{in }\Omega_T:= \Omega\times (0,T),
\end{equation}
where $n\geq 2$, $\frac{2n}{n+2}<p<q<\infty$, $T>0$ and $\Omega$ is a bounded open set in $\mr^n$. Here, we assume that
\begin{equation}\label{cond : assumption of w and a}
0\leq a(\cdot)\in C^{\alpha,\frac{\alpha}{2}}(\Omega_T)\quad (\alpha\in(0,1]).
\end{equation}
Here, $a(\cdot)\in C^{\alpha,\frac{\alpha}{2}}(\Omega_T)$ means that $a(\cdot)\in L^\infty(\Omega_T)$ and there exists a H\"{o}lder constant $[a]_\alpha:= [a]_{\alpha,\frac{\alpha}{2};\Omega_T}>0$ such that
$$
|a(x_1,t_1)-a(x_2,t_2)|\leq [a]_\alpha \max\{|x_1-x_2|^\alpha,|t_1-t_2|^{\frac{\alpha}{2}}\}
$$
for all $x_1,\,x_2\in\Omega$ and $t_1,\,t_2\in(0,T)$. The definition of weak solutions for \eqref{def : main model equation of parabolic double phase problem} is given as follows:
\begin{definition}\label{def : weak solution}
    A function $u:\Omega_T\rightarrow\mr$ with
    $$
    u\in C(0,T;L^2(\Omega))\cap L^1(0,T;W^{1,1}(\Omega))
    $$
    and
    $$
    \iints{\Omega_T} H(z,|Du|)\, dz<\infty
    $$
    is a weak solution to \eqref{def : main model equation of parabolic double phase problem} if
    $$
    \iints{\Omega_T} \left[-u\cdot \varphi_t + (|Du|^{p-2}Du+a(z)|Du|^{q-2}Du)\cdot D\varphi\right] dz=0
    $$
    for every $\varphi\in C_0^\infty(\Omega_T)$.
\end{definition}
The Lavrentiev phenomenon for parabolic double phase problems has only recently been investigated in \cite{Chlebicka2019,Chlebicka2021}. In these papers, the absence of the Lavrentiev phenomenon has been proved for a locally integrable $N$-function under a suitable balance condition, which encompasses the parabolic double phase problems as an example. In addition, similar to the results for the elliptic double phase problem, the gap bound condition guarantees the regularity of the weak solutions. Indeed, for the degenerate parabolic double phase problems (that is, $p\geq 2$), under the gap bound condition
$$
q\leq p+\frac{2\alpha}{n+2},
$$ 
the energy estimates of weak solutions and the existence theory of weak solutions have been established in Kim-Kinnunen-S\"{a}rki\"{o} \cite{Wontae2023a} (see also \cite{Chlebicka2019,Singer2016}) and the (spatial) gradient higher integrability results have been studied in Kim-Kinnunen-Moring \cite{2023_Gradient_Higher_Integrability_for_Degenerate_Parabolic_Double-Phase_Systems}.
Furthermore, as in the elliptic case, improved regularity of $u$ allows for a relaxation of the gap bound condition. Indeed, Chlebicka-Garain-Kim \cite{chlebicka2025gradient} and Kim-Oh \cite{kim2025bounded} have obtained gradient higher integrability under the assumption
$$
u\in L^\infty(\Omega_T)\quad \text{and}\quad q\leq p +\alpha.
$$
Also, in \cite{kim2025bounded}, the authors demonstrated an interpolation of the gap bound condition for the degenerate parabolic double phase problem by showing that higher integrability results hold even under the assumption
$$
u\in C(0,T;L^s(\Omega))\quad \text{and}\quad q\leq p +\frac{s\alpha}{n+s}.
$$
These results hold similarly for singular parabolic double phase problems (i.e., $\frac{2n}{n+2}<p\leq 2$); for further details, refer to \cite{Wontae2024,kim2026interpolative}. Beyond these, regularity results concerning the parabolic double phase problems can be found in \cite{Buryachenko2022,Wontae2025,Kim2024,Kim2025,Sen2025,Wontae2023b}.

Now, in order to prove the absence of the Lavrentiev phenomenon, we propose a new parabolic double phase functional. Regarding the elliptic setting, focusing on local minimizers rather than weak solutions naturally allowed for the study of the associated double phase functional, which in turn served as a key tool for proving the absence of the Lavrentiev phenomenon. Accordingly, to address the absence of the Lavrentiev phenomenon in the parabolic setting, a functional associated with the parabolic double phase is required. While one may refer to \cite{Kinnunen2015,Habermann2016,Habermann2015,Fujishima2014} for the theory of parabolic quasi-minimizers, those frameworks are insufficient for establishing our desired results. Hence, we introduce a new functional in this paper. We consider the parabolic double phase functional
\begin{equation}\label{def : parabolic double phase functional}
\mc{F}(w,\Omega_T):=\sup_{t\in[0,T]}\int_{\Omega}|w(x,t)|^2\,dx+\mc{P}(w,\Omega_T),
\end{equation}
where $\Omega_T$ is a parabolic cylinder with an open bounded set $\Omega\subset \mr^n$, $n\geq 2$ and $2\leq p < q$. Here, $\mathcal{P}(w,\Omega_T)$ is the parabolic double phase functional without $L^2$-energy term given by
\begin{equation}    \label{def : stationary part of the parabolic double phase functional}
\mathcal{P}(w,\Omega_T):= \iints{\Omega_T} \left[ |Dw(x,t)|^p + a(x,t)|Dw(x,t)|^q\right] dxdt.
\end{equation}
\begin{remark}
    The design of this functional is motivated by the function space described in Definition \ref{def : weak solution}. The condition that a weak solution $u\in L^1(0,T;W^{1,1}(\Omega))$ satisfies 
    $$
    \iints{\Omega_T} H(z,|Du|)\, dz < \infty
    $$ 
    means that $\mathcal{P}(u,\Omega_T)$ is finite, while the $L^2$-energy term in \eqref{def : parabolic double phase functional} reflects the requirement $u \in C(0,T;L^2(\Omega))$. Therefore, the finiteness of $\mathcal{F}$ in \eqref{def : parabolic double phase functional} effectively implies that $u$ belongs to the space prescribed in Definition \ref{def : weak solution}.
\end{remark}

In this paper, we prove the absence of the Lavrentiev phenomenon by utilizing the functional defined in \eqref{def : parabolic double phase functional}. For this, we define $H:\Omega_T\times\mr^n\to[0,\infty)$ by
$$
H(z,\xi):=|\xi|^p+a(z)|\xi|^q
$$
for $z\in\Omega_T$ and $\xi\in\mr^n$. With a slight abuse of notation, we shall denote $H(z,\xi)$ also when $\xi\in[0,\infty)$. Also, we write a cylinder $Q$ as
$$
Q:= B\times I,
$$
where $B$ is a ball in $\mr^n$ and $I$ is an interval. The following theorem states the results concerning the absence of the Lavrentiev phenomenon for the parabolic double phase functional without the $L^2$-energy term under several gap bound conditions.

\begin{theorem}    \label{thm: main theorem 1}
    \rm Let $\mathcal{P}$ be the functional defined in \eqref{def : stationary part of the parabolic double phase functional} under \eqref{cond : assumption of w and a}. Then the following results hold:
    \begin{enumerate}[label = (\roman*)]
        \item\label{case : general absence of Lavrentiev phenomenon} If 
        \begin{equation}    \label{cond : general gap bound condition}
            q\leq p+\frac{p\alpha}{n+2},
        \end{equation}
        then for every function $w\in L^p_{\operatorname{loc}}(0,T;W^{1,p}_{\operatorname{loc}}(\Omega))$ and cylinders $Q\subset\subset\tilde{Q}\subset\subset\Omega_T$ satisfying $\mathcal{P}(w,\tilde{Q})<\infty$, there exists a sequence $\{w_m\}_{m=1}^\infty$ of $C^\infty$-functions such that
        $$
        w_m\rightarrow w \ \text{ in }L^p(I;W^{1,p}(B))\quad\text{and}\quad \mathcal{P}(w_m,Q)\rightarrow\mathcal{P}(w,Q).
        $$
        \item\label{case : absence of Lavrentiev phenomenon of bounded solution} The same conclusion holds for $w\in L^\infty_{\operatorname{loc}}(\Omega_T)\cap  L^p_{\operatorname{loc}}(0,T;W^{1,p}_{\operatorname{loc}}(\Omega))$ if
        \begin{equation}    \label{cond : gap bound condition of bounded solution}
            q\leq p+\max\left\{\frac{p\alpha}{n+2},\alpha\right\}.
        \end{equation}
        \item\label{case : absence of Lavrentiev phenomenon of s-condition} The same assertion remains valid for $w\in C_{\operatorname{loc}}(0,T;L^s_{\operatorname{loc}}(\Omega)) \cap L^p_{\operatorname{loc}}(0,T;$ $W^{1,p}_{\operatorname{loc}}(\Omega))$ with $s\geq 2$ if
        \begin{equation}    \label{cond : gap bound condtion of s-condition}
            q\leq p+\max\left\{\frac{s\alpha}{n+s},\frac{p\alpha}{n+2}\right\}.
        \end{equation}
    \end{enumerate}
\end{theorem}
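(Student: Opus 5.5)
The plan is to mollify $w$ in space and time and show that $\mathcal{P}(w_\varepsilon,Q)\to\mathcal{P}(w,Q)$. Fix a standard space--time mollifier $\phi_\varepsilon(x,t)=\varepsilon^{-n-2}\phi(x/\varepsilon,t/\varepsilon^2)$, which is adapted to the parabolic scaling of $a\in C^{\alpha,\frac\alpha2}$. Put $w_\varepsilon=w*\phi_\varepsilon$ on $Q$, for $\varepsilon$ small enough that $Q+\supp\phi_\varepsilon\subset\tilde Q$. Since $w\in L^p(\tilde I;W^{1,p}(\tilde B))$, we get $w_\varepsilon\to w$ in $L^p(I;W^{1,p}(B))$ and $Dw_\varepsilon\to Dw$ a.e. along a subsequence. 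Fatou's lemma then gives $\mathcal{P}(w,Q)\le\liminf\mathcal{P}(w_\varepsilon,Q)$, so only the limsup inequality remains. For it we use the standard Zhikov/Esposito--Leonetti--Mingione device. Set $a_\varepsilon(z):=\min_{\overline{Q_\varepsilon(z)}}a$, where $Q_\varepsilon(z)=B_\varepsilon(x)\times(t-\varepsilon^2,t+\varepsilon^2)$ contains the support of $\phi_\varepsilon(z-\cdot)$. Then $a_\varepsilon\le a$ on that cylinder, and Jensen's inequality gives
\begin{equation*}
|Dw_\varepsilon|^p+a_\varepsilon(z)|Dw_\varepsilon|^q\le \big(H(\cdot,Dw)*\phi_\varepsilon\big)(z)=:h_\varepsilon(z),
\end{equation*}
with $h_\varepsilon\to H(\cdot,Dw)$ in $L^1(Q)$. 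It therefore suffices to show that the error term $(a(z)-a_\varepsilon(z))|Dw_\varepsilon(z)|^q\le[a]_\alpha\varepsilon^\alpha|Dw_\varepsilon|^q$ is dominated by $C\,h_\varepsilon(z)$, plus a quantity that is uniformly integrable or tends to $0$. Granted this, a generalized dominated convergence argument (the dominating functions converge in $L^1$) combined with a.e.\ convergence yields $\mathcal{P}(w_\varepsilon,Q)\to\mathcal{P}(w,Q)$.

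The main step, and the real obstacle, is a pointwise bound on $\varepsilon^\alpha|Dw_\varepsilon|^{q-p}$ under each gap condition, and each case needs its own estimate.

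In case \ref{case : general absence of Lavrentiev phenomenon}, Hölder's inequality gives $|Dw_\varepsilon(z)|\le \|\phi_\varepsilon\|_{L^{p'}}\|Dw\|_{L^p(Q_\varepsilon(z))}\le C\varepsilon^{-(n+2)/p}\|Dw\|_{L^p(\tilde Q)}$. Hence
\begin{equation*}
\varepsilon^\alpha|Dw_\varepsilon|^{q-p}\le C\varepsilon^{\alpha-(q-p)(n+2)/p},
\end{equation*}
which is bounded exactly when $q\le p+\frac{p\alpha}{n+2}$. Uniform smallness is not needed, since boundedness suffices for domination.

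In case \ref{case : absence of Lavrentiev phenomenon of bounded solution} we use the derivative of the mollifier instead: $Dw_\varepsilon=w*D\phi_\varepsilon$, so $|Dw_\varepsilon|\le C\varepsilon^{-1}\|w\|_{L^\infty(\tilde Q)}$. This gives $\varepsilon^\alpha|Dw_\varepsilon|^{q-p}\le C\varepsilon^{\alpha-(q-p)}$, which is bounded when $q\le p+\alpha$. Taking the better of the two bounds produces the maximum in \eqref{cond : gap bound condition of bounded solution}.

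In case \ref{case : absence of Lavrentiev phenomenon of s-condition} we want the analogous estimate $|Dw_\varepsilon(z)|\le C\varepsilon^{-1}\big(\sup_t\varepsilon^{-n}\int_{B_\varepsilon}|w|^s\,dx\big)^{1/s}\le C\varepsilon^{-1-n/s}$, using $w\in C(\tilde I;L^s(\tilde B))$. This yields boundedness if $q-p\le \frac{\alpha s}{n+s}$. The naive estimate gives the exponent $\alpha-(q-p)(1+n/s)$, and $\alpha\ge(q-p)\frac{n+s}{s}$ is exactly the stated condition, so this step should go through. We then take the maximum with the case (i) bound.

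One refinement may be needed. In case (i) the two bounds must be combined so that the constants stay finite, because $\|Dw\|_{L^p(Q_\varepsilon(z))}\to0$ only locally; boundedness is nevertheless enough. For a cleaner proof one could subtract the average of $w$ over $Q_\varepsilon(z)$ before estimating $Dw_\varepsilon=(w-c)*D\phi_\varepsilon$, but in cases (ii) and (iii) the direct bounds already suffice. Finally, $w_\varepsilon$ is $C^\infty$ on a neighbourhood of $\overline Q$; multiplying by a cutoff equal to $1$ on $Q$ produces global $C^\infty$ functions $w_m$.
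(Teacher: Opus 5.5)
Your proposal is correct and follows essentially the same route as the paper. Both arguments use the parabolic mollifier, the frozen coefficient $a_\varepsilon(z)=\min_{\overline{Q_\varepsilon(z)}}a$ combined with Jensen's inequality, the three pointwise bounds $|Dw_\varepsilon|\lesssim\varepsilon^{-(n+2)/p}$, $\varepsilon^{-1}$ and $\varepsilon^{-(n+s)/s}$ (taking the better one in cases (ii) and (iii)), and a generalized dominated convergence argument. Your extra remarks are harmless but unnecessary: the liminf via Fatou, subtracting averages, and the worry about ``combining bounds'' in case (i). The uniform bound $\varepsilon^\alpha|Dw_\varepsilon|^{q-p}\le C$ already gives $H(z,Dw_\varepsilon)\le C\,h_\varepsilon(z)$ on all of $Q$.
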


The results regarding the absence of the Lavrentiev phenomenon for the parabolic double phase functional defined in \eqref{def : parabolic double phase functional} are stated as follows:
\begin{theorem} \label{thm: main theorem 2}
    \rm  Let $\mathcal{F}$ be the functional defined in \eqref{def : parabolic double phase functional} under \eqref{cond : assumption of w and a} and assume that
    $$
    w\in C_{\loc}(0,T;L_{\loc}^2(\Omega))\cap L^p_{\operatorname{loc}}(0,T;W^{1,p}_{\operatorname{loc}}(\Omega)).
    $$
    \begin{enumerate}[label = (\roman*)]
        \item If \eqref{cond : general gap bound condition} holds, then for every pair of cylinders $Q\subset\subset\tilde{Q}\subset\subset\Omega_T$ satisfying $\mathcal{F}(w,\tilde{Q})<\infty$, there exists a sequence $\{w_m\}_{m=1}^\infty$ of $C^\infty$-functions such that
        $$
        w_m\rightarrow w \ \text{ in }L^p(I;W^{1,p}(B)),\quad w_m\rightarrow w \ \text{ in }C(I;L^2(B))\quad
        $$
        $$
        \text{and}\quad \mathcal{F}(w_m,Q)\rightarrow\mathcal{F}(w,Q).
        $$
        \item The same conclusion holds if $w\in L_{\loc}^\infty(\Omega_T)$ is further assumed and \eqref{cond : gap bound condition of bounded solution} holds.
        \item The same result is obtained for
        $$
        w\in C_{\loc}(0,T;L_{\loc}^s(\Omega))\cap L^p_{\operatorname{loc}}(0,T;W^{1,p}_{\operatorname{loc}}(\Omega))\quad(s\geq 2)
        $$
        if \eqref{cond : gap bound condtion of s-condition} holds.
    \end{enumerate}
\end{theorem}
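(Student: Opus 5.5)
The plan is to derive Theorem \ref{thm: main theorem 2} from Theorem \ref{thm: main theorem 1}, using the same approximating sequence. In all three cases I take $w_m := w * \phi_{\varepsilon_m}$, where $\phi_\varepsilon$ is a standard space-time mollifier and $\varepsilon_m\to0$ is small enough that $Q+\operatorname{supp}\phi_{\varepsilon_m}\subset\tilde Q$. I would check that the proof of Theorem \ref{thm: main theorem 1} can be run with exactly this mollification. The regularized coefficient $a_\varepsilon(z)=\inf_{Q_\varepsilon(z)} a$ is only a proof device for comparing $a(z)|Dw_\varepsilon|^q$ with $(a|Dw|^q)*\phi_\varepsilon$. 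The hypotheses of Theorem \ref{thm: main theorem 2} contain those of the corresponding part of Theorem \ref{thm: main theorem 1}: finiteness of $\mathcal F(w,\tilde Q)$ gives $\mathcal P(w,\tilde Q)<\infty$, and in case (iii) the hypothesis is the same $C_{\loc}(0,T;L^s_{\loc})$ assumption. Hence Theorem \ref{thm: main theorem 1} yields $w_m\to w$ in $L^p(I;W^{1,p}(B))$ and $\mathcal P(w_m,Q)\to\mathcal P(w,Q)$.

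It remains to show that $w_m\to w$ in $C(I;L^2(B))$. Since $\bar{\tilde Q}\subset\Omega_T$ is compact and $w\in C_{\loc}(0,T;L^2_{\loc}(\Omega))$, the map $t\mapsto w(\cdot,t)\in L^2(\tilde B)$ is uniformly continuous on $\bar{\tilde I}$. For $t\in I$ I write
$$
w_m(\cdot,t)-w(\cdot,t)=\int\!\!\int \phi_{\varepsilon_m}(y,\tau)\,\big[w(\cdot-y,t-\tau)-w(\cdot,t)\big]\,dy\,d\tau .
$$
By Minkowski's integral inequality, $\|w_m(\cdot,t)-w(\cdot,t)\|_{L^2(B)}$ is bounded by the supremum over $|y|,|\tau|\le\varepsilon_m$ of two terms: $\|w(\cdot-y,t-\tau)-w(\cdot-y,t)\|_{L^2(B)}$ and $\|w(\cdot-y,t)-w(\cdot,t)\|_{L^2(B)}$. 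The first term is at most the $L^2(\tilde B)$ modulus of continuity in time, which is uniform in $t$. The second term is controlled by continuity of translations in $L^2$. This is uniform in $t$ because $\{w(\cdot,t):t\in\bar I\}$ is compact in $L^2(\tilde B)$, being the continuous image of a compact interval. Compact subsets of $L^2$ are equi-continuous under translation, for example by a finite $\delta$-net argument. Hence $\sup_{t\in I}\|w_m(t)-w(t)\|_{L^2(B)}\to0$.

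Finally, $\big|\sup_{t\in I}\|w_m(t)\|_{L^2(B)}^2-\sup_{t\in I}\|w(t)\|_{L^2(B)}^2\big|$ is at most $\sup_t\|w_m(t)-w(t)\|_{L^2(B)}\,\sup_t\big(\|w_m(t)\|_{L^2(B)}+\|w(t)\|_{L^2(B)}\big)$, and this tends to $0$ because both suprema are finite. Adding the $\mathcal P$-convergence gives $\mathcal F(w_m,Q)\to\mathcal F(w,Q)$. One detail needs care: $\mathcal F$ uses the sup over the closed interval $\bar I$. For the smooth $w_m$ this sup is attained. For $w$, the continuity in time on $\bar I\subset\tilde I$ makes the closed-interval sup equal to the sup over $I$.

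The main obstacle is to ensure that Theorem \ref{thm: main theorem 1} is invoked for the same mollified sequence, rather than for some unspecified sequence of $C^\infty$-functions. If its proof produced a different sequence, for example one built with a modified coefficient, I would need to re-check the $L^2$ step for that construction. Any construction of the form convolution plus cutoff would still be handled by the same translation-continuity argument.
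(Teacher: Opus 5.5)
Your proposal is correct, and its skeleton matches the paper. The sequence is the same as in Theorem \ref{thm: main theorem 1}, namely $w_m=[w]^{h_m}$, so the obstacle you flag does not arise. It must, however, be the parabolic kernel $h^{-n-2}\kappa(x/h,t/h^2)$ and not an isotropic space-time one, since that scaling is what yields the gap bounds. The $\sup_t\int_B|w_m|^2$ term is handled by the same elementary inequality as in the paper.

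The genuine difference is in proving $w_m\to w$ in $C(I;L^2(B))$. The paper splits the increment into a spatial shift at the shifted time $t-\tau$ plus a time shift. The time part is treated by uniform continuity, as you do. The spatial part goes through the gradient, $\|w(\cdot-y,\tau)-w(\cdot,\tau)\|_{L^2(B)}\le|y|\,\|Dw(\cdot,\tau)\|_{L^2(B_0)}$. This is followed by $L^p(B_0)\subset L^2(B_0)$, which is where $p\ge2$ enters, and by H\"older in time against $\tilde\kappa_1^h\le ch^{-2}$. The result is the bound $Ch^{1-2/p}\|Dw\|_{L^p(Q_0)}$ for $p>2$, with a separate absolute-continuity argument needed when $p=2$.

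You instead place the spatial shift at the fixed time $t$. You then use that the trajectory $\{w(\cdot,t):t\in\bar I\}$ is compact in $L^2(\tilde B)$, hence equicontinuous under translations. Your route needs only $w\in C(\bar{\tilde I};L^2(\tilde B))$: no spatial Sobolev regularity, no $p\ge2$, no special case $p=2$, and any kernel with shrinking support works. The paper's route is more explicit and gives a rate for the spatial part when $p>2$. The overall convergence is qualitative anyway, because the time part has no rate.
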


These theorems are proved by employing a simultaneous space-time mollification technique, as introduced in \cite{hasto2025higher}. This approach differs from that of \cite{Chlebicka2019,Chlebicka2021}, where the absence of the Lavrentiev phenomenon was established by mollifying the time and space variables separately. The rest of the paper is structured as follows:
\begin{itemize}
    \item Section \ref{section 2} introduces the parabolic mollification used throughout this work and explores its basic properties.
    \item Section \ref{section 3} provides the proof for the absence of the Lavrentiev phenomenon for the parabolic double phase functional without the $L^2$-energy term.
    \item Section \ref{section 4} focuses on establishing the $C_{\operatorname{loc}}(0,T;L_{\operatorname{loc}}^2(\Omega))$ convergence in order to prove Theorem \ref{thm: main theorem 2}.
\end{itemize}

\section{\bf Parabolic mollification}\label{section 2}
Define 
$$
\kappa(z)=\kappa(x,t):= \tilde{\kappa}_n(|x|)\tilde{\kappa}_1(t),
$$
where $\tilde{\kappa}_m\in C^\infty (\mr)$ is given by
$$
\tilde{\kappa}_m(s):= \begin{cases}
    c_m\operatorname{exp}\left(\frac{1}{s^2-1}\right)\quad &\text{if} \ \, |s|<1,\\
    0\quad &\text{if} \ \, |s|\geq 1,
\end{cases}
$$
with $c_m>0$ determined by $\int_{\mr^m}\tilde{\kappa}_m(|x|)\, dx =1$. Denote
$$
\kappa_h(x,t):= \frac{1}{h^{n+2}}\kappa\left(\frac{x}{h},\frac{t}{h^2}\right)
$$
for $h>0$. We notice that $\kappa_h\in C^\infty(\mr^{n+1})$, $\kappa_h\geq 0$, $\|\kappa_h\|_{L^1(\mr^{n+1})}=1$ and $\operatorname{supp}(\kappa_h)\subset Q_h(0)$, where $Q_r(z)=B_r(x)\times I_{r^2}(t)$ with $r>0$ and $z=(x,t)\in\mr^n\times\mr$. Moreover, $0\leq \kappa_h\leq c(n)h^{-n-2}$, $0\leq |D\kappa_h|\leq c(n)h^{-n-3}$ and $0\leq |\partial_t\kappa_h|\leq c(n)h^{-n-4}$.

We further define for $f\in L^1(U)$, where $U$ is a bounded open set in $\mr^{n+1}$,
$$
[f]^h (z):= (f * \kappa_h)(z)=\iints{\mr^{n+1}} f(z-\sigma)\kappa_h(\sigma)\,d\sigma=\iints{Q_h(0)} f(z-\sigma)\kappa_h(\sigma)\, d\sigma,
$$
where $Q_h(z)\subset U$.
\begin{proposition}
    Let $U'\subset\subset U$ and $0<h<2^{-1}\dist(\partial U,U')$. Then the following three statements hold:
    \begin{enumerate}[label=\upshape(\roman*)]
        \item $[f]^h\in C^\infty (U')$ with $D^\alpha [f]^h=f*D^\alpha\kappa_h$;
        \item $[f]^h\to f$ a.e. in $U$;
        \item if $f\in L^p(I;W^{k,p}(B))$ with $U'=Q$ is further assumed, then $[D^\alpha f]^h= D^\alpha[f]^h$ on $Q$ for every multi-index $|\alpha|\leq k$.
    \end{enumerate}
\end{proposition}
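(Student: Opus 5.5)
We will treat the three statements separately, since each is a standard property of convolution adapted to the anisotropic kernel $\kappa_h$. Throughout, fix $z\in U'$. Because $h<2^{-1}\dist(\partial U,U')$, the translated cylinder $z-Q_h(0)$ is contained in $U$ for every such $z$. The same holds for all $z'$ in a small neighbourhood of $z$, so $[f]^h$ only involves values of $f$ on $U$. We extend $f$ by zero outside $U$, so that $f\in L^1(\mr^{n+1})$ and the convolution is defined everywhere.

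For (i), we write $[f]^h(z)=\iints{\mr^{n+1}} f(\sigma)\kappa_h(z-\sigma)\,d\sigma$ and differentiate under the integral sign.
\begin{itemize}
\item For a coordinate direction $e_j$ in space-time, the difference quotient $\frac{1}{\tau}[\kappa_h(z+\tau e_j-\sigma)-\kappa_h(z-\sigma)]$ converges uniformly in $\sigma$ to $\partial_j\kappa_h(z-\sigma)$, because $\kappa_h\in C_c^\infty$ and so its derivatives are uniformly continuous.
\item Since $f\in L^1$, dominated convergence gives $\partial_j[f]^h=f*\partial_j\kappa_h$.
\item Induction on $|\alpha|$ yields $D^\alpha[f]^h=f*D^\alpha\kappa_h$, and continuity of each such function follows in the same way.
\end{itemize}
Hence $[f]^h\in C^\infty(U')$.

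For (ii), we use the Lebesgue differentiation theorem with respect to the parabolic cylinders $Q_h(z)$. The sets $\{Q_h(0)\}$ form a regular family: they are nested and satisfy $|Q_{2h}|=2^{n+2}|Q_h|$, i.e. they are doubling, so the differentiation theorem holds for them. Using $0\le\kappa_h\le c(n)h^{-n-2}$ and $|Q_h(0)|\simeq h^{n+2}$, we estimate
$$
|[f]^h(z)-f(z)|\le \iints{Q_h(0)}|f(z-\sigma)-f(z)|\kappa_h(\sigma)\,d\sigma\le c(n)\,\frac{1}{|Q_h|}\iints{Q_h(z)}|f(\zeta)-f(z)|\,d\zeta .
$$
The right-hand side tends to $0$ at every Lebesgue point of $f$ with respect to parabolic cylinders, hence for a.e. $z$. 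The convergence is understood as $h\to0$, where each point is eventually in the admissible range for $h$.

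For (iii), we take $f\in L^p(I;W^{k,p}(B))$ with $U'=Q=B\times I$, and consider a spatial multi-index $\alpha$ with $|\alpha|\le k$.
\begin{itemize}
\item By (i), $D^\alpha[f]^h(z)=\iints{}f(\zeta)\,D_x^\alpha[\kappa_h(z-\zeta)]\,d\zeta=(-1)^{|\alpha|}\iints{}f(\zeta)\,D^\alpha_y\kappa_h(x-y,t-s)\,dy\,ds$.
\item For a.e. fixed $s$, the slice $f(\cdot,s)$ lies in $W^{k,p}$ on the spatial section. The function $y\mapsto\kappa_h(x-y,t-s)$ is a smooth test function compactly supported there, because $z-Q_h(0)\subset U$.
\item Integrating by parts in $y$ for each such $s$ moves the derivative onto $f$ and cancels the sign.
\item Fubini then gives $[D^\alpha f]^h(z)$.
\end{itemize}

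The main obstacle is this last step: justifying that the slice-wise weak derivative is well defined and measurable in $(y,s)$, so that Fubini applies. This holds because $f\in L^p(I;W^{k,p})$ means exactly that $D^\alpha f\in L^p$ jointly and coincides a.e. in $s$ with the weak derivative of the slices. Equivalently, one can test the space-time weak derivative definition directly against $\varphi(\zeta)=\kappa_h(z-\zeta)\in C_c^\infty(U)$.
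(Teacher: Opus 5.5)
Your proposal is correct and follows essentially the same route as the paper: uniform convergence of the difference quotients of $\kappa_h$ for (i), the Lebesgue-point estimate using $\kappa_h\le c(n)h^{-n-2}$ on parabolic cylinders for (ii), and slice-wise integration by parts in the spatial variable followed by Fubini for (iii). Your additional remarks (time derivatives in (i), the differentiation theorem for parabolic cylinders in (ii), joint measurability of $D^\alpha f$ in (iii)) make explicit points the paper leaves tacit. As in the paper, (iii) implicitly uses spatial Sobolev regularity of $f$ on a neighbourhood of $Q$, since $Q_h(z)$ leaves $Q$ for $z$ near $\partial Q$.
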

\begin{proof}
    (i) Note that $Q_h(z)\subset\subset U$ for any $z\in U'$. Choose any $\epsilon>0$ so small that $z+\epsilon e_i\in U'$ and $Q_h(z+\epsilon e_i)\subset\subset U$. Then we take an open set $V\subset\subset U$ so that
    $$
    Q_h(z)\cup Q_h(z+\epsilon e_i)\subset V\subset U.
    $$
    Since $\supp\kappa_h\subset Q_h(0)$, we obtain
    $$
    \begin{aligned}
        \frac{[f]^h(z+\epsilon e_i)-[f]^h(z)}{\epsilon} & =\iints{U} f(\sigma)\left( \frac{\kappa_h(z+\epsilon e_i-\sigma)-\kappa_h(z-\sigma)}{\epsilon} \right)\,d\sigma \\
        & =\iints{V} f(\sigma)\left( \frac{\kappa_h(z+\epsilon e_i-\sigma)-\kappa_h(z-\sigma)}{\epsilon} \right)\,d\sigma.
    \end{aligned}
    $$
    By the Mean Value Theorem, 
    $$
    \frac{\kappa_h(z+\epsilon e_i-\sigma)-\kappa_h(z-\sigma)}{\epsilon}\rightarrow D_{x_i}\kappa_h(z-\sigma)
    $$
    uniformly on $V$ as $\epsilon\rightarrow 0$. It follows that
    $$
    D_{x_i}[f]^h(z)=(f*D_{x_i}\kappa_h)(z).
    $$
    Similarly, we have
    $$
    D^\alpha[f]^h=f*D^\alpha\kappa_h\quad\text{on }U'.
    $$
    
    (ii) Let $z\in U$ be a Lebesgue point of $f$. Then we see that
    $$
    \begin{aligned}
        |[f]^h(z)-f(z)| & =\left| \iints{Q_h(z)}\kappa_h(z-\sigma)f(\sigma)\,d\sigma-\iints{Q_h(z)}\kappa_h(z-\sigma)f(z)\,d\sigma \right| \\
        & \leq\iints{Q_h(z)}\left| \kappa_h(z-\sigma) \right|\left| (f(\sigma)-f(z)) \right|\,d\sigma \\
        & \leq\frac{c(n)}{h^{n+2}}\iints{Q_h(z)}\left| f(\sigma)-f(z) \right|\,d\sigma \\
        & =\frac{c(n)}{|Q_h(z)|}\iints{Q_h(z)}|f(\sigma)-f(z)|\,d\sigma\to 0\quad\text{as }h\to 0.
    \end{aligned}
    $$
    
    (iii) Finally, suppose that $U'=Q$, $f\in L^p(I;W^{k,p}(B))$ and $z=(x,t)\in Q$. Using (i), we have
    $$
    \begin{aligned}
        D^\alpha[f]^h(z) & =\iints{Q_h(z)}f(\sigma)D_x^\alpha\kappa_h(z-\sigma)\,d\sigma \\
        & =\int_{I_{h^2}(t)}\int_{B_h(x)}f(y,\tau)D_x^\alpha\kappa_h(x-y,t-\tau)\,dyd\tau \\
        & =(-1)^{|\alpha|}\int_{I_{h^2}(t)}\int_{B_h(x)}f(y,\tau)D_y^\alpha\kappa_h(x-y,t-\tau)\,dyd\tau \\
        & =(-1)^{2|\alpha|}\int_{I_{h^2}(t)}\int_{B_h(x)}D_y^\alpha f(y,\tau)\kappa_h(x-y,t-\tau)\,dyd\tau \\
        & =\iints{Q_h(z)}D_y^\alpha f(\sigma)\kappa_h(z-\sigma)\,d\sigma = (D^\alpha f*\kappa_h)(z).
    \end{aligned}
    $$
\end{proof}

\begin{proposition}[\cite{hasto2025higher}, Proposition 4.1]    \label{prop: proposition 4.1 for 2025Hasto}
    Let $U\subset \mr^{n+1}$ be a bounded open set and let $\kappa_h\in C^\infty_c (\mr^{n+1})$ and $[f]^h$ be as above. Then, for $f\in L_{\operatorname{loc}}^H(U)$, $[f]^h\rightarrow f$ in $L_{\operatorname{loc}}^H(U)$ as $h\rightarrow0^+.$
\end{proposition}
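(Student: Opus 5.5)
The statement is a modular-convergence result in the Musielak--Orlicz space $L^H_{\loc}(U)$ with $H(z,\xi)=|\xi|^p+a(z)|\xi|^q$. I would prove it by the standard three-step scheme: reduce to compact support, approximate by continuous functions, and control the mollifier uniformly in $h$. Here $L^H_{\loc}(U)$ means $\iint_K H(z,|f|)\,dz<\infty$ for every compact $K\subset U$. Since $H$ satisfies the $\Delta_2$ condition (with $q<\infty$), convergence in the Luxemburg norm on $K$ is equivalent to modular convergence $\iint_K H(z,|[f]^h-f|)\,dz\to0$. So fix $K\subset\subset K'\subset\subset U$ with $h<\dist(K,\partial K')$, and replace $f$ by $f\chi_{K'}$; this does not change $[f]^h$ on $K$.

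\textbf{Step 1 (density of nice functions).} $C_c(K')$, or bounded compactly supported functions, is dense in the modular sense in $L^H(K')$. Because $a$ is bounded, $H(z,|g|)\le c(|g|^p+|g|^q)$, so truncation $T_kf=\max\{-k,\min\{f,k\}\}$ converges modularly by dominated convergence. After that, Lusin-type approximation of bounded functions by continuous ones, with uniform $L^\infty$ bounds, converges in $L^q$ and hence modularly. For continuous compactly supported $g$ we have $[g]^h\to g$ uniformly, so $\iint_K H(z,|[g]^h-g|)\to0$ trivially.

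\textbf{Step 2 (uniform bound, the main obstacle).} The whole argument rests on a uniform estimate
\[
\iint_K H(z,|[v]^h|)\,dz\le C\iint_{K'} H(z,|v|)\,dz+C'
\]
for all $v\in L^H(K')$, with $C$ independent of $h$. Applied to $v=f-g$, this gives $\iint_K H(z,|[f]^h-f|)\le c\iint H(|[f-g]^h|)+c\iint H(|[g]^h-g|)+c\iint H(|g-f|)$, and choosing $g$ first and then $h$ small finishes the proof. One subtlety is that the additive constant $C'$ must be removed by a scaling or $\Delta_2$ argument when $v$ is small in modular.

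The difficulty is the $z$-dependence of $a$, which enters through the Jensen step on $Q_h(z)$. I would use the standard technique from \cite{hasto2025higher} based on $a_h^-(z):=\inf_{Q_h(z)}a$. Convexity of $t\mapsto|t|^p+a_h^-(z)|t|^q$ and Jensen's inequality give
\[
|[v]^h(z)|^p+a_h^-(z)|[v]^h(z)|^q\le [H(\cdot,|v|)]^h(z).
\]
The remaining term is $(a(z)-a_h^-(z))|[v]^h|^q\le [a]_\alpha h^\alpha|[v]^h|^q$. This is where a balance condition is needed: one needs $h^\alpha|[v]^h|^q\lesssim |[v]^h|^p+1$, i.e. $|[v]^h|^{q-p}\lesssim h^{-\alpha}$.

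\begin{itemize}
\item If $f$ is bounded (Step 1 allows reduction to bounded data in some arguments), this holds as soon as $q-p\le\alpha$ and $h$ is small.
\item In the general case one uses $|[v]^h|\le c\,h^{-(n+2)/p}\|v\|_{L^p}$, which requires $q-p\le p\alpha/(n+2)$, precisely the gap condition \eqref{cond : general gap bound condition}.
\end{itemize}

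For the proposition as stated, citing \cite{hasto2025higher}, I would assume whatever balance condition that reference imposes. Concretely, I would interpret it as requiring $H$ to satisfy the parabolic (A1)-type condition implied by \eqref{cond : assumption of w and a} together with the gap bound. If the bound $\|v\|_{L^p}\le 1$ is used, the argument is applied to $(f-g)/\lambda$ with small modular, which is where the additive constant is absorbed.
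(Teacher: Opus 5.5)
You are right that a balance condition has to be added. With only $0\le a\in C^{\alpha,\frac{\alpha}{2}}$, the statement as printed fails whenever $q>p+\frac{p\alpha}{n+2}$. For a counterexample, take $a=\min\{(x_1)_+^\alpha,1\}$, $r_k=2^{-k}$ and $f=\sum_k r_k^{-(n+2+\alpha)/q}\chi_{Q_{r_k/2}(z_k)}$, where $z_k$ lies at distance $2r_k$ from $\{x_1=0\}$ inside $\{x_1<0\}$. Then $a=0$ on $\supp f$ and $\|f\|_{L^p}^p$ is a convergent geometric series, so $f\in L^H$. But for $h_k=8r_k$ one has $[f]^{h_k}\gtrsim r_k^{-(n+2+\alpha)/q}$ on a cylinder $E_k\subset\{r_k/2<x_1<r_k\}$ of size $\approx r_k$, where $f=0$ and $a\gtrsim r_k^\alpha$. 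Hence $\iints{E_k}H(z,|[f]^{h_k}-f|)\,dz\gtrsim 1$ for every $k$. The paper gives no proof of its own: it cites \cite{hasto2025higher}, and in Section 3 it invokes the proposition only for $\varphi_p$ and $\varphi_1$, where it is the classical $L^p$/$L^1$ fact. Under $q\le p+\frac{p\alpha}{n+2}$ your argument is correct.

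Your Step 2 is the same device the paper uses in Section 3 (compare $(\star)$ and $(\star\star)$): Jensen with $a_h(z)=\inf_{Q_h(z)}a$, the bound $a-a_h\le [a]_\alpha h^\alpha$, and $|[v]^h|\le ch^{-\frac{n+2}{p}}\|v\|_{L^p}$. What differs is how you conclude. The paper turns this into the pointwise bound $H(z,|[v]^h(z)|)\le c\,[H(\cdot,|v|)]^h(z)$ and finishes with the generalized dominated convergence theorem. Applied to $v=f$, this gives $H(z,|[f]^h-f|)\le 2^q\big(c\,[H(\cdot,|f|)]^h+H(z,|f|)\big)$. The right-hand side converges in $L^1(K)$ by the classical result, while the left-hand side tends to $0$ a.e. This makes your Step 1 (truncation, Lusin, uniform convergence of $[g]^h$) unnecessary. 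Your density-plus-uniform-bound scheme is the standard Musielak--Orlicz proof. It also gives uniform boundedness of $v\mapsto[v]^h$ on $L^H$, at the price of length.

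Two small corrections. First, the additive constant $C'$ never arises. Integrating $H(z,|[v]^h|)\le\big(1+c\|v\|_{L^p(K')}^{q-p}\big)[H(\cdot,|v|)]^h$ over $K$ gives a purely multiplicative bound. It is uniform for $v=f-g$ once $\iints{K'}H(z,|v|)\,dz\le 1$, since that modular controls $\|v\|_{L^p(K')}^p$, so no rescaling is needed. Second, your bounded-data bullet mixes the function and gradient levels. For the function itself $|[v]^h|\le\|v\|_{L^\infty}$, so bounded $f$ converge for every $q$ by dominated convergence. The condition $q\le p+\alpha$ comes instead from the gradient bound $|D[w]^h|\le ch^{-1}\|w\|_{L^\infty}$ in Theorem \ref{thm: main theorem 1}(ii). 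In any case $f-g$ is not bounded in general, so that bullet plays no role in your proof.
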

Indeed, Proposition \ref{prop: proposition 4.1 for 2025Hasto} holds not only for $H$ but also for the functions $\varphi_t:\Omega_T\times[0,\infty)\to[0,\infty)$, $t\geq 1$, defined by $\varphi_t(z,s)=s^t$, see \cite{hasto2025higher}.

\section{\bf Lavrentiev phenomenon without $L^2$-energy term}\label{section 3}

\begin{proof}[Proof of Theorem \ref{thm: main theorem 1}]
Let $h_0>0$ be such that $Q+Q_{h_0}(0) := \{ z+w : z \in Q, w \in Q_{h_0}(0) \} \subset\subset\tilde{Q}=\tilde{B}\times\tilde{I}$ and denote $Q+Q_{h_0}(0)=Q_0=B_0\times I_0$. From now on, we always assume $0<h<h_0$ so that $[w]^h$ is well-defined in $Q$.

\ref{case : general absence of Lavrentiev phenomenon} Define
$$
[w]^h(z):=(w*\kappa_h)(z),\quad a_h(z):=\inf_{\zeta \in Q_h(z)}a(\zeta),
$$
and
$$
H_h(z,\xi):=|\xi|^p+a_h(z)|\xi|^q.
$$
for $z\in Q$ and $\xi\in\mr^n$. Since 
$$
w\in L_{\operatorname{loc}}^p(0,T; W_{\operatorname{loc}}^{1,p}(\Omega))
$$
and $\tilde{Q}\subset\subset\Omega_T$, we have
$$
\iints{\tilde{Q}} |w|^p+|Dw|^p\,dz = \int_{\tilde{I}}\|w\|_{W^{1,p}(\tilde{B})}^p\,dt=\|w\|_{L^p(\tilde{I};W^{1,p}(\tilde{B}))}^p<\infty.
$$
It follows that
$$
w,\,Dw\in L^{\varphi_p}(\tilde{Q})\subset L_{\operatorname{loc}}^{\varphi_p}(\tilde{Q}).
$$
By Proposition \ref{prop: proposition 4.1 for 2025Hasto}, $[w]^h\to w$ and $D[w]^h=[Dw]^h\to Dw$ in $L_{\operatorname{loc}}^{\varphi_p}(\tilde{Q})$. In particular,
$$
[w]^h\to w\quad\text{and}\quad D[w]^h\to Dw\quad\text{in }L^{\varphi_p}(Q).
$$
Therefore,
$$
\begin{aligned}
    \|[w]^h-w\|_{L^p(I;W^{1,p}(B))}^p & =\int_I\|[w]^h-w\|_{W^{1,p}(B)}^p\,dt \\
    & =\iints{Q}|[w]^h-w|^p\,dz+\iints{Q}|D[w]^h-Dw|^p\,dz\to 0
\end{aligned}
$$
as $h\searrow 0$; that is,
$$
[w]^h\to w\quad\text{in }L^p(I;W^{1,p}(B)).
$$

On the other hand, for every $z\in Q$ and $\xi\in\mr^n$,
\begin{equation}    \label{eq: estimate H by H_h}\tag{$\star$}
    H(z,\xi)=(a(z)-a_h(z))|\xi|^q+H_h(z,\xi)\leq ch^\alpha|\xi|^q+H_h(z,\xi)
\end{equation}
by the H\"{o}lder continuity for $a(\cdot)$. In particular, it holds for $\xi=D[w]^h(z)$. By Jensen's inequality with $\int_{\mr^{n+1}}\kappa_h\,d\sigma=1$, we have
\begin{equation}    \label{eq: estimate |D[w]^h|^p}\tag{$\star\star$}
    \begin{aligned}
        \left| D[w]^h(z) \right|^p & =\left| \iints{Q_h(0)}Dw(z-\sigma)\kappa_h(\sigma)\,d\sigma \right|^p \\
        & \leq\iints{Q_h(0)}|Dw(z-\sigma)|^p\kappa_h(\sigma)\,d\sigma\\
        &\leq ch^{-n-2}\iints{\tilde{Q}}|Dw|^p\, dz\leq ch^{-n-2}
    \end{aligned}
\end{equation}
for every $z\in Q$, where $c>1$ depends on $n$ and $\|H(\cdot, Dw)\|_{L^1(\tilde{Q})}$. Therefore,
$$
\begin{aligned}
    H(z,D[w]^h(z)) & \leq ch^\alpha|D[w]^h(z)|^q+H_h(z,D[w]^h(z)) \\
    & =ch^\alpha|D[w]^h(z)|^{q-p}|D[w]^h(z)|^p+H_h(z,D[w]^h(z)) \\
    & \leq ch^{\alpha-\frac{(n+2)(q-p)}{p}}H_h(z,D[w]^h(z))+H_h(z,D[w]^h(z)) \\
    & \leq cH_h(z,D[w]^h(z)).
\end{aligned}
$$
The last inequality holds as
\begin{equation}\label{cond : general assumption}
\alpha-\frac{(n+2)(q-p)}{p}\geq 0 \quad\Longleftrightarrow\quad q\leq p+\frac{p\alpha}{n+2}.
\end{equation}
By Jensen's inequality again, we have
$$
\begin{aligned}
    H_h(z,D[w]^h(z)) & \leq \iints{Q_h(0)}\left[ |Dw(z-\sigma)|^p+a_h(z)|Dw(z-\sigma)|^q \right]\kappa_h(\sigma)\,d\sigma \\
    & \leq \iints{Q_h(0)} \left[ |Dw(z-\sigma)|^p+a(z-\sigma)|Dw(z-\sigma)|^q \right]\kappa_h(\sigma)\,d\sigma \\
    & =[H(\cdot,Dw(\cdot))*\kappa_h](z)=[H(\cdot,Dw(\cdot))]^h(z).
\end{aligned}
$$
Thus,
$$
H(z,D[w]^h(z))\leq c[H(\cdot,Dw(\cdot))]^h(z)
$$
for every $z\in Q$. Since $\mathcal{P}(w,\tilde{Q})<\infty$, $H(\cdot,Dw(\cdot))\in L^{\varphi_1}(\tilde{Q})\subset L_{\operatorname{loc}}^{\varphi_1}(\tilde{Q})$. Applying Proposition \ref{prop: proposition 4.1 for 2025Hasto} again, we have $[H(\cdot,Dw(\cdot))]^h\to H(\cdot,Dw(\cdot))$ in $L^{\varphi_1}(Q)=L^1(Q)$. Since $D[w]^h\to Dw$ a.e. in Q, $H(z,D[w]^h (z))\to H(z,Dw(z))$ a.e. $z\in Q$. A generalization of the dominated convergence theorem completes our claim with $w_m:=[w]^{h_m}\in C^\infty(Q)$, where $h_m\searrow 0$.

\ref{case : absence of Lavrentiev phenomenon of bounded solution} As in part \eqref{eq: estimate |D[w]^h|^p}, we observe that for any $z\in Q$,
$$
\begin{aligned}
    \left| D[w]^h(z) \right| & =\left| \iints{Q_h(0)}w(z-\sigma)D\kappa_h(\sigma)\,d\sigma \right| \\
    & \leq \iints{Q_h(0)} \left| w(z-\sigma)D\kappa_h(\sigma) \right|\,d\sigma \\
    & \leq \|w\|_{L^\infty(\tilde{Q})}c(n)h^{-n-3}|Q_h| \\
    & \leq c h^{-1}.
\end{aligned}
$$
Thus, \eqref{eq: estimate H by H_h} implies 
$$
\begin{aligned}
    H(z,D[w]^h(z)) & \leq ch^\alpha|D[w]^h(z)|^q+H_h(z,D[w]^h(z)) \\
    & =ch^\alpha|D[w]^h(z)|^{q-p}|D[w]^h(z)|^p+H_h(z,D[w]^h(z)) \\
    & \leq ch^{\alpha-q+p}H_h(z,D[w]^h(z))+H_h(z,D[w]^h(z)) \\
    & \leq cH_h(z,D[w]^h(z))
\end{aligned}
$$
with the following assumption:
\begin{equation}\label{cond : bounded assumption}
\alpha-q+p\geq 0 \quad\Longleftrightarrow\quad q\leq p+\alpha.
\end{equation}
Since 
$n+2\geq p$ is equivalent to $\frac{p\alpha}{n+2}\leq\alpha$,
we have the desired result by arguing as in the case where \eqref{cond : bounded assumption} is assumed when $n+2\geq p$, and by following the approach used for \eqref{cond : general assumption} otherwise.

\ref{case : absence of Lavrentiev phenomenon of s-condition} As in the proof of \ref{case : absence of Lavrentiev phenomenon of bounded solution}, if
\begin{equation}\label{cond : s-assumption}
q\leq p +\frac{s\alpha}{n+s} \iff \alpha-\frac{(n+s)(q-p)}{s}\geq 0
\end{equation}
holds, then we obtain from H\"{o}lder's inequality that for any $z \in Q$,
$$
\begin{aligned}
    |D[w]^h(z)|& \leq \iints{Q_h(z)}|w(\sigma)||D\kappa_h(z-\sigma)|\,d\sigma\\
    &\leq ch^{-n-3}\int_{I_{h^2}(t)}\left(\int_{B_h(x)}|w(y,\tau)|^s\,dy\right)^\frac{1}{s}|B_h(x)|^{\frac{s-1}{s}}\,d\tau\\
    & \leq ch^{-3-\frac{n}{s}}\int_{I_{h^2}(t)}\|w\|_{C(I_0;L^s(B_0))} \,d\tau\\
    &\leq ch^{-\frac{n+s}{s}}.
\end{aligned}
$$
Therefore, for any $z\in Q$, we get
$$
\begin{aligned}
    H(z,D[w]^h(z)) & \leq ch^\alpha|D[w]^h(z)|^{q-p}|D[w]^h(z)|^p+H_h(z,D[w]^h(z)) \\
    & \leq ch^{\alpha-\frac{(n+s)(q-p)}{s}}H_h(z,D[w]^h(z))+H_h(z,D[w]^h(z)) \\
    & \leq cH_h(z,D[w]^h(z)).
\end{aligned}
$$

When $n+2 > p$ and $\frac{pn}{n-p+2}\leq s$, $\frac{p}{n+2}\leq \frac{s}{n+s}$. In this case, we proceed as in the setting where \eqref{cond : s-assumption} is imposed to obtain the desired result; otherwise, we adopt the approach used under \eqref{cond : general assumption}.
\end{proof}

\section{\bf Lavrentiev phenomenon with $L^2$-energy term}\label{section 4}

\begin{proof}[Proof of Theorem \ref{thm: main theorem 2}]
From Section \ref{section 3}, we already have the following: if at least one of our assumptions is true, then
$$
[w]^h\to w\quad\text{in }L^p(I;W^{1,p}(B))\quad\text{and}\quad\mc{P}([w]^h,Q)\to\mc{P}(w,Q).
$$

Note that if $z=(x,t)\in Q$, then
$$
\begin{aligned}
    [w]^h(z)-w(z) & =\iints{Q_h(0)} (w(z-\sigma)-w(z))\kappa_h(\sigma)\,d\sigma \\
    & = \int_{I_{h^2}(0)}\int_{B_h(0)}(w(x-y,t-\tau)-w(x,t-\tau))\kappa_h(y,\tau)\,dyd\tau \\
    & \hspace{1em} +\int_{I_{h^2}(0)}\int_{B_h(0)}(w(x,t-\tau)-w(x,t))\kappa_h(y,\tau)\,dyd\tau.
\end{aligned}
$$
Applying Minkowski's inequality and Jensen's inequality, we have
$$
\begin{aligned}
    & \|[w]^h(\cdot,t)-w(\cdot,t)\|_{L^2(B)} \\
    & \hspace{2em} \leq\int_{I_{h^2}(0)}\int_{B_h(0)}\|w(\cdot-y,t-\tau)-w(\cdot,t-\tau)\|_{L^2(B)}\kappa_h(y,\tau)\,dyd\tau \\
    & \hspace{3em} +\int_{I_{h^2}(0)}\int_{B_h(0)}\|w(\cdot,t-\tau)-w(\cdot,t)\|_{L^2(B)}\kappa_h(y,\tau)\,dyd\tau \\
    & \hspace{2em} \eqcolon I_1^h(t)+I_2^h(t)
\end{aligned}
$$
for any $t\in I$. 

\textit{Limit for $\sup_{I}I_2^h$}: Since $w\in C_{\loc}(0,T;L_{\loc}^2(\Omega))$, $w:[0,T]\to L^2(B)$ is uniformly continuous in $\overline{I} \subset (0,T)$. Hence, for each fixed $\epsilon>0$, there exists $\delta>0$ such that
$$
\sup_{t\in I}\|w(\cdot,t-\tau)-w(\cdot,t)\|_{L^2(B)}<\epsilon\quad\text{whenever }|\tau|<\delta.
$$
Assume that $0<h<\sqrt{\delta}$. Then we see that for $t\in I$,
$$
\begin{aligned}
    I_2^h(t) & =\int_{I_{h^2}(0)}\int_{B_h(0)}\|w(\cdot,t-\tau)-w(\cdot,t)\|_{L^2(B)}\kappa_h(y,\tau)\,dyd\tau \\
    & \leq \int_{I_{h^2}(0)}\int_{B_h(0)}\sup_{t\in I}\|w(\cdot,t-\tau)-w(\cdot,t)\|_{L^2(B)}\kappa_h(y,\tau)\,dyd\tau \\
    & <\epsilon\int_{I_{h^2}(0)}\int_{B_h(0)}\kappa_h(y,\tau)\,dyd\tau=\epsilon.
\end{aligned}
$$
Therefore,
$$
\lim_{h\to 0}\sup_{t\in I}I_2^h(t)=0.
$$

\textit{Limit for $\sup_{I}I_1^h$}: Let $\epsilon>0$ be given. The Fundamental Theorem of Calculus implies that if $\tau \in I_0$ and $w(\cdot,\tau)\in C^\infty(B_0)$, for every $y\in B_h(0)$,
$$
\begin{aligned}
    \|w(\cdot-y,\tau)-w(\cdot,\tau)\|_{L^2(B)}^2 & = \int_B|w(x-y,\tau)-w(x,\tau)|^2\,dx \\
    & = \int_B\left| \int_0^1\frac{d}{d\lambda}[w(x-\lambda y,\tau)]\,d\lambda \right|^2\,dx \\
    & \leq \int_B\int_0^1\left| -y\cdot Dw(x-\lambda y,\tau) \right|^2\,d\lambda\,dx \\
    & \leq \int_B\int_0^1\left| y \right|^2\left| Dw(x-\lambda y,\tau) \right|^2\,d\lambda\,dx \\
    & \leq \left| y \right|^2\int_0^1\int_B\left| Dw(x-\lambda y,\tau) \right|^2\,dx\,d\lambda \\
    & \leq \left| y \right|^2\int_0^1\int_{B_0}\left| Dw(x,\tau) \right|^2\,dx\,d\lambda \\
    & = |y|^2\|Dw(\cdot,\tau)\|_{L^2(B_0)}^2.
\end{aligned}
$$
By approximation, the same inequality holds for $w(\cdot,\tau)\in W^{1,p}(B_0)$ with $\tau \in I_0$. Therefore, for any $t\in I$,
$$
\begin{aligned}
    I_1^h(t) & =\int_{I_{h^2}(t)}\int_{B_h(0)}\|w(\cdot-y,\tau)-w(\cdot,\tau)\|_{L^2(B)}\kappa_h(y,t-\tau)\,dyd\tau \\
    & \leq\int_{I_{h^2}(t)}\int_{B_h(0)}|y|\|Dw(\cdot,\tau)\|_{L^2(B_0)}\kappa_h(y,t-\tau)\,dyd\tau \\
    & \leq h\int_{I_{h^2}(t)}\int_{B_h(0)}\|Dw(\cdot,\tau)\|_{L^2(B_0)}\kappa_h(y,t-\tau)\,dyd\tau \\
    & = h\int_{I_{h^2}(t)}\|Dw(\cdot,\tau)\|_{L^2(B_0)}\tilde{\kappa}_1^h(t-\tau)\,d\tau,
\end{aligned}
$$
where $\tilde{\kappa}_1^h(\tau)=h^{-2}\tilde{\kappa}_1(\tau/h^2)$. Observe that
$$
\|Dw(\cdot,\tau)\|_{L^2(B_0)}\leq C\|Dw(\cdot,\tau)\|_{L^p(B_0)}
$$
and $0\leq \tilde{\kappa}_1^h\leq ch^{-2}$. Thus,
$$
\begin{aligned}
    & h\int_{I_{h^2}(t)}\|Dw(\cdot,\tau)\|_{L^2(B_0)}\tilde{\kappa}_1^h(t-\tau)\,d\tau \\
    & \quad\leq Ch\int_{I_{h^2}(t)}\|Dw(\cdot,\tau)\|_{L^p(B_0)}\tilde{\kappa}_1^h(t-\tau)\,d\tau \\
    & \quad\leq Ch\left( \int_{I_{h^2}(t)}\|Dw(\cdot,\tau)\|_{L^p(B_0)}^p\,d\tau \right)^{\frac{1}{p}}\left( \int_{I_{h^2}(0)}(\tilde{\kappa}_1^h(\tau))^{\frac{p}{p-1}}\,d\tau \right)^{\frac{p-1}{p}} \\
    & \quad\leq Ch^{1-\frac{2}{p}}\|Dw\|_{L^p(I_0;L^p(B_0))}.
\end{aligned}
$$
Since $Dw\in L^p(I_0;L^p(B_0))$, if $p>2$, we have
$$
\lim_{h\to 0}\sup_{t\in I}I_1^h(t)=0.
$$
On the other hand, if $p=2$, then
$$
I_1^h(t)\leq C\left( \int_{I_{h^2}(t)}\|Dw(\cdot,\tau)\|_{L^2(B_0)}^2\,d\tau \right)^{\frac{1}{2}}.
$$
Define $f(\tau):=\|Dw(\cdot,\tau)\|_{L^2(B_0)}^2$ for $\tau\in I_0$ so that $f\in L^1(I_0)$. By absolute continuity of the Lebesgue integral, for each $\epsilon>0$, there exists $\delta_0>0$ such that
$$
\int_{E}f(\tau)\,d\tau<\epsilon\quad\text{whenever }E\text{ is Borel and }|E|<\delta_0.
$$
Since $|I_{h^2}(t)|=2h^2$, if we take $\delta=\sqrt{\delta_0/2}$ and assume $h<\delta$, then
$$
\int_{I_{h^2}(t)}f(\tau)\,d\tau<\epsilon\quad\text{for any }t\in I.
$$
Therefore, when $p=2$, we get
$$
\lim_{h\to 0}\sup_{t\in I}I_1^h(t)=0.
$$
Consequently, we obtain
$$
[w]^h\to w\quad\text{in }C(I;L^2(B)).
$$
Moreover, since
$$
0\leq\left| \|[w]^h\|_{C(I;L^2(B))}-\|w\|_{C(I;L^2(B))} \right|\leq\|[w]^h-w\|_{C(I;L^2(B))},
$$
we have
$$
\sup_{t \in I} \int_{B} |[w]^h|^2 \, dx = \|[w]^h\|_{C(I;L^2(B))}^2 \to \|w\|_{C(I;L^2(B))}^2 = \sup_{t \in I} \int_{B} |w|^2 \, dx,
$$
and hence
$$
\mc{F}([w]^h,Q)\to\mc{F}(w,Q).
$$
Letting $w_m:=[w]^{h_m}\in C^\infty(Q)$ with $h_m\searrow 0$, we complete the proof.
\end{proof}

\bibliographystyle{abbrv}
\bibliography{ref}{}
\end{document}